\newcommand{\setR}{\mathbb{R}}
\newcommand{\setRnex}{\overline{\setR}^n}
\newcommand{\setN}{\mathbb{N}}
\newcommand{\setC}{\mathbb{C}}
\newcommand{\agents}{\mathcal{N}}
\newcommand{\tfwy}[1]{\Phi_{y #1}}
\newcommand{\tfwx}[1]{\Phi_{x #1}}
\newcommand{\tfwu}[1]{\Phi_{u #1}}
\newcommand{\graphcom}{\mathcal{G}}
\newcommand{\edges}{\mathcal{E}}
\newcommand{\feas}{\mathcal{X}}
\newcommand{\Vector}[1]{\begin{bmatrix}
		#1
\end{bmatrix}}
\DeclareMathOperator{\prox}{Prox}
\DeclareMathOperator*{\argmin}{\arg \min}
\newcommand{\proxim}[1]{\prox_{#1}}
\newcommand{\indic}{\mathcal{I}}
\newcommand{\lagrangian}{\mathcal{L}}
\newcommand{\conj}[1]{{#1}^{\star}}
\newcommand{\norm}[1]{\lVert #1 \rVert}
\newcommand{\grad}{\nabla}
\newcommand{\fir}{\mathcal{J}}
\newcommand{\ALC@uniqueautorefname}{Step}
\newtheorem{assumption}{Assumption}
\newtheorem{remark}{Remark}
\newtheorem{lemma}{Lemma}
\newtheorem{theorem}{Theorem}
\title{\LARGE \bf Distributed and Constrained \( \mathcal{H}_2 \) Control Design via System Level Synthesis and Dual Consensus
	ADMM
}
\author{Panagiotis D. Grontas, Michael W. Fisher and Florian D\"orfler
\thanks{This scientific paper was supported by the Onassis Foundation - Scholarship ID: F ZQ 019-1/2020-2021.}
\thanks{Panagiotis D. Grontas and Florian D\"orfler are with the Automatic Control Laboratory, ETH Zurich, 8092 Zurich, Switzerland.}
\thanks{Michael W. Fisher is with the University of Waterloo, Waterloo, Ontario, Canada.}
}
\newcommand\copyrighttext{%
	\footnotesize \textcopyright 
	2022 IEEE.  Personal use of this material is permitted.  Permission from IEEE must be obtained for all other uses, in any current or future media, including reprinting/republishing this material for advertising or promotional purposes, creating new collective works, for resale or redistribution to servers or lists, or reuse of any copyrighted component of this work in other works.
	}
\newcommand\copyrightnotice{%
	\begin{tikzpicture}[remember picture,overlay]
		\node[anchor=north,yshift=-15pt] at (current page.north) {\fbox{\parbox{\dimexpr\textwidth-\fboxsep-\fboxrule\relax}{\copyrighttext}}};
	\end{tikzpicture}%
}
\begin{document}


\maketitle
\thispagestyle{empty}
\pagestyle{empty}

\begin{abstract}
	
  Design of optimal distributed linear feedback controllers to achieve a desired aggregate behavior, 
  while simultaneously satisfying state and input constraints, 
  is a challenging but important problem in many applications.
  System level synthesis is a recent technique which has been 
  used to reparametrize the optimal control problem as a convex program.
  Prior work on system level synthesis with state and input constraints has included closed-loop finite impulse 
  response and locality constraints or, in the case where these constraints were lifted using
  a simple pole approximation, only a centralized design was considered.  
  However, closed-loop finite impulse response and locality constraints cannot be satisfied in many applications.  
  Furthermore, the centralized design using the simple pole approximation lacks robustness to communication failures
  and disturbances, has high computational cost and does not preserve data privacy of local controllers.
  The main contribution of this work is to develop a distributed solution to system level synthesis with the 
  simple pole approximation in order to incorporate state and input constraints without closed-loop finite impulse response 
  or locality constraints, and in a distributed implementation.
  To achieve this, it is first shown that the dual of this problem 
  is a distributed consensus problem.
  Then, an algorithm is developed based on the alternating direction method of multipliers
  to solve the dual while recovering a primal solution, and a convergence certificate is provided.
  Finally, the method's performance is demonstrated on a test case of control design for 
  distributed energy resources that collectively provide stability services to the power grid.
\end{abstract}

\copyrightnotice
\section{Introduction}
Optimal design of linear feedback controllers with state and input constraints is a challenging but important problem.
One celebrated approach is the Youla parametrization \cite{youla_param} which casts
the optimal control problem as a convex
program in terms of the Youla parameter,
while input-output parametrization (IOP) \cite{iop} is a more recent method that focuses on output feedback.
Along this direction, the recent work \cite{sls} introduced system level synthesis (SLS)
whereby controllers are parametrized in terms of the closed-loop system responses.
The resulting optimization problem is convex yet infinite-dimensional, hence intractable in general.
To overcome this in the setting with state and input constraints, the authors employ a finite impulse response (FIR) approximation
of the closed-loop transfer functions \cite{chen2019slsconstraints}.
Unfortunately, this approximation is not feasible for stabilizable but uncontrollable systems and, 
even when feasible, gives rise to deadbeat control which
suffers a number of shortcomings such as high computational cost and
lack of robustness to uncertainty and disturbances due to its large control gains
\cite{dbc_shortcomings}.
Furthermore, \cite{sls}, \cite{chen2019slsconstraints} include additional closed-loop 
locality constraints, which are helpful for distributed implementations of SLS, but are not 
satisfied in many networked systems with coupling throughout the different areas in the network, such as power systems.
By contrast, in \cite{msls_placeholder} a computationally-efficient simple pole approximation (SPA) is developed
for which the closed-loop is not FIR, there are no closed-loop locality constraints, suboptimality certificates are derived,
feasibility for stabilizable plants is guaranteed
and prior knowledge of the system's optimal poles can be integrated.
This approach does not result in deadbeat control since the system responses are not FIR.
Moreover, state and input constraints can be non-conservatively incorporated in the SPA formulation
as illustrated in \cite{spa_dvpp_placeholder}.

The goal of this work is to enable a distributed implementation of SLS with SPA.
This offers a number of advantages over the centralized approach, including robustness to communication failures, uncertainty and 
disturbances, scalability due to the distribution
of the computational burden and preservation of data privacy of agents
owing to the absence of a central coordinator.
Achieving this distributed deployment ultimately
amounts to solving a distributed optimization problem.
In particular, the considered problem structure of SPA subject to a peer-to-peer communication setup
belongs to the recently-introduced Distributed Aggregative Optimization (DAO) framework \cite{dao_main}.
Existing DAO algorithms typically require differentiability and strong convexity of the objective 
function (see \cite{dao_main}, \cite{dao_2})
which is not guaranteed to hold for SPA control design.
Although \cite{dao_3} relaxes the strong convexity assumption, 
it requires vanishing step sizes, which substantially reduces the convergence rate.
Moreover, these methods do not have the ability to incorporate constraints coupled across 
multiple devices, which can often appear in practice (e.g.\ see \autoref{sec:test_case}).

The main contribution of this work lies in the development of a new optimal linear feedback 
control design method with non-conservative state and input constraints, no closed-loop FIR or 
locality constraints, and a distributed and convex implementation.  
To do so, starting from the centralized formulation of SLS with SPA we show that the dual
is a consensus problem and, thus, can be solved using the distributed algorithm proposed in \cite{consensus_admm}, 
which is based on the alternating direction method of multipliers (ADMM) \cite{admm_classic}.
This dual consensus ADMM approach was employed in \cite{dual_consensus_zero} and \cite{dual_consensus_large}
to solve decentralized resource allocation problems.
Here, a general distributed optimization scheme is developed, which is an extension of these prior methods
to a larger class of objective functions.
Under weak assumptions, convergence certificates are provided
that guarantee convergence of the primal variables to the set of primal solutions,
which is a stronger convergence result than in prior work \cite{dual_consensus_zero,dual_consensus_large},
where either stronger assumptions are made (such as smoothness, strong convexity, and full rank of associated matrices), 
or it is only shown that each limit point is a minimizer of the primal problem, 
but cannot guarantee the existence of any limit points (so the primal variables could diverge towards infinity).
Then, the algorithm is specialized for the SPA control design by exploiting the underlying problem structure
to simplify the ADMM subproblems thus improving computational efficiency. 

A recent application of distributed control is heterogeneous ensemble control \cite{verena_dvpp},
where the goal is to design local controllers so that in 
aggregate they achieve a desired dynamic behavior as well as possible, subject to device limitations 
and coupling constraints.  
Prior distributed solution methods for this problem rely primarily on heuristics for disaggregating 
desired behavior among the agents, cannot explicitly incorporate state and input constraints, 
and require manual tuning of controller parameters \cite{verena_dvpp, bjork_dvpp}.  
The distributed control design developed here has bounded suboptimality \cite{msls_placeholder}, 
explicitly includes state, input and coupling constraints non-conservatively, 
and does not require any manual tuning related to controller specifications, addressing the limitations of prior work.  
Its effectiveness for solving heterogeneous ensemble control is demonstrated in \autoref{sec:test_case} 
for control of distributed energy resources (DERs) to collectively provide frequency
control to the power grid.

The remainder of this paper is structured as follows.
In \autoref{sec:problem_formulation}, we review SLS and SPA,
and formulate the optimal control design problem.
In \autoref{sec:dual_consensus}, we abstract the previous problem, develop a distributed
ADMM-based algorithm to solve it and establish its convergence.
\autoref{sec:test_case} explains our simulation setup and demonstrates
our algorithm's performance,
while \autoref{sec:conclusion} concludes the paper.

\textit{Notation:} Let \( \setN, \setR, \setC \) respectively denote
the set of natural, real and complex numbers, while
\( \overline{\setR} = \setR \cup \{+\infty\} \).
\( \setR^n \) is the \( n \)-dimensional Euclidean space
and \( \norm{\cdot} \) its norm, and
let \( \setRnex = \setR^n \cup \{ \infty \} \) denote
the extended Euclidean space, also known as the Riemann sphere.
Given a matrix \( A \) we denote \( A^T \) its transpose,
and for some matrix \( B \) of appropriate dimensions
\( [A;B] \) is their vertical concatenation.
A function \( f : \setR^n \to \overline{\setR} \) is \textit{proper} and \textit{closed} 
if its epigraph is, respectively, non-empty and closed.
Let \( \Gamma_n \) denote the set of proper, closed and convex functions
\( f: \setR^n \to \overline{\setR} \).
Then, for \( f \in \Gamma_n \) the \textit{conjugate} of \( f \) is defined as
\( \conj{f}(y) := \sup_x \{ y^T x - f(x) \} \) and satisfies \( \conj{f} \in \Gamma_n \),
the \textit{proximal operator} is 
\( \proxim{f}^{\rho}(x) := \argmin_y \{f(y) + \frac{\rho}{2} \norm{x - y}_2^2\} \)
for any \( \rho > 0 \),
and the \textit{subdifferential} is the set-valued map
\( \partial f(x) := \{ u \in \setR^n ~|~ (\forall y \in \setR^n) ~ f(y) \geq f(x) + u^T (y - x) \} \).
We let \( \indic_{\mathcal{X}} \) denote the \textit{indicator function} of 
\( \mathcal{X} \subseteq \setR^n \) for which it holds\ 
\( \indic_{\mathcal{X}}(x) = 0 \) for \( x \in \mathcal{X} \),
and \( \indic_{\mathcal{X}}(x) = + \infty \) otherwise.
For any sequence \( (x^k)_{k \in \setN} \subseteq \setRnex \) let \( \omega(x^k) \)
denote the \( \omega \) limit set: the set of points \( y \in \setRnex \) such that
there exists a subsequence of \( (x^k)_{k \in \setN} \) which converges to \( y \).
For any set \( \mathcal{S} \subseteq \setRnex \), we say that \( (x^k)_{k \in \setN} \)
converges to \( \mathcal{S} \), denoted by \( x^k \to \mathcal{S} \), if for every open neighborhood \( V \)
of \( \mathcal{S} \), there exists \( N > 0  \) finite such that \( k \geq N \) implies \( x^k \in V \).

\section{Problem Formulation and Background} \label{sec:problem_formulation}
We consider a collection of \( \agents = \{1, \ldots, N\} \) discrete-time LTI systems, 
referred to as agents, with local dynamics
\begin{equation}
	\begin{aligned}
		x_i^{k+1} & = A_i x_i^k + B_i u_i^k + \hat{B}_i w^k \\
		y_i^k & = C_i x_i^k
	\end{aligned}
\end{equation}
for each agent \( i \in \agents \) and time step \( k \in \setN \).
We denote \( x_i^k \in \setR^{n_{x,i}} \) the state and
\( u_i^k \in \setR^{n_{u,i}} \) the individual control signal of each system, 
\( w^k \in \setR^{n_{w}} \) is an external disturbance,
while \( y_i^k \in \setR^{n_{y,i}} \) is the output.
We endow each agent
with a dynamic state feedback controller of the form \( U_i(z) = K_i(z) X_i(z) \),
where \( X_i(z)\) and \( U_i(z) \) are the z-transforms of the signals
\( x_i^k \) and \( u_i^k\), respectively.
Then, the agent-specific closed loop transfer function mapping disturbance
to output is
\( 	\tfwy{,i}(z) = C_i (zI - A_i - B_i K_i(z))^{-1} \hat{B}_i \)
and \( \tfwx{,i}(z), \tfwu{,i}(z) \) are defined similarly,
while we let \( \Phi_{\text{des}}(z) \) be a desired transfer function,
i.e., a design choice.
Our goal is to solve the following model matching problem
\begin{equation} \label{eq:sls_initial}
	\begin{alignedat}{2}
		& \underset{K_1(z), \ldots, K_N(z)}{\textrm{minimize }}~ &&  
		\Big\lVert \sum_{i \in \agents}\tfwy{,i}(z) - \Phi_{\text{des}}(z) \Big\rVert_{\mathcal{H}_2}^2 \\
		& ~~\textrm{subject to} && \tfwx{,i}(z), \tfwu{,i}(z) \in \mathcal{R}
	\end{alignedat}
\end{equation}
where \( \norm{\cdot}_{\mathcal{H}_2} \) is the \( \mathcal{H}_2 \) norm and
\( \mathcal{R} \) is the Hardy space of real, rational, strictly proper
and stable transfer functions, that additionally satisfy time-dependent constraints on
states and inputs, for a collection of known disturbance signals
(which can include, e.g., impulses and/or steps).
Unfortunately, \eqref{eq:sls_initial} is non-convex in
\( K_i(z) \).
In their seminal work \cite{sls}, the authors propose SLS
whereby the previous problem is reformulated by using \( \tfwx{,i}(z), \tfwu{,i}(z) \)
as design variables.
This reparametrization renders the problem convex,
at the price of imposing additional affine constraints.
The resulting controller for each agent can be recovered as \( K_i(z) = \tfwu{,i}(z) \tfwx{,i}^{-1}(z)  \),
although realizations exist that do not require any transfer function inversion.
Crucially, controller recovery relies solely on local information, i.e., \(  \tfwx{,i}(z), \tfwu{,i}(z)  \),
so no additional communication is necessary for this step.
Nonetheless, the problem is infinite-dimensional, and hence intractable in this form.  
A FIR approximation is proposed in \cite{chen2019slsconstraints} to obtain a tractable control design problem, 
but it suffers from the drawbacks discussed in the introduction.

In order to address the limitations of the closed-loop FIR and locality constraints, \cite{msls_placeholder} proposed
an approximation of \( \tfwx{,i}(z), \tfwu{,i}(z) \) using simple poles as follows
\begin{align}
	\tfwx{,i}(z) = \sum_{p \in \mathcal{P}_i}^{} G_{p,i} \frac{1}{z - p}, ~
	\tfwu{,i}(z) = \sum_{p \in \mathcal{P}_i}^{} H_{p,i} \frac{1}{z - p}
\end{align}
where \( \mathcal{P}_i \subseteq \setC \) is a fixed 
finite set of complex poles inside the unit disk and closed under complex conjugation, 
and the decision variables \( G_{p,i}, H_{p,i} \) are complex matrix coefficients associated with 
pole \( p \in \mathcal{P}_i \) and agent \( i \in \agents \).
The authors prove convergence to a globally optimal solution as the number of poles increases,
and suboptimality bounds based on the geometry of the pole selection are provided \cite{msls_placeholder}.
This approximation renders \eqref{eq:sls_initial} both convex and tractable.

Following \cite{msls_placeholder}, 
we denote the impulse response of \( \Phi_{x,i} \) and \( \Phi_{u,i} \)
at time step \( k \) as 
\( \fir^k [\Phi_{x,i}] := \sum_{p \in \mathcal{P}_i} G_{p,i} p^{k-1} \)
and 
\( \fir^k [\Phi_{u,i}] := \sum_{p \in \mathcal{P}_i} H_{p,i} p^{k-1} \),
respectively.
Then, to solve problem \eqref{eq:sls_initial} we recognize that 
the \( \mathcal{H}_2 \) norm is well-approximated by the Frobenius norm of the (sufficiently large) 
finite-horizon impulse response \cite{msls_placeholder}.
Moreover, for a known disturbance signal \( (w^k)_{k \in \setN} \) the state
of system \( i \in \agents \) at step \( k \in \setN \) is given by
\( x_i^k = \sum_{l=0}^{k} \fir^{k-l} [\Phi_{x,i}] \hat{B}_i w^l \),
while the input is \( u_i^k = \sum_{l=0}^{k} \fir^{k-l} [\Phi_{u,i}] \hat{B}_i w^l \).
Crucially, observe that \( \fir^k [\Phi_{x,i}] \) (resp.\ \( \fir^k [\Phi_{u,i}] \)) depends linearly on 
the decision variable \( G_{p,i} \) (resp.\ \( H_{p,i} \)) and hence the constraint 
\( x_i^k \in \mathcal{C} \) (resp.\ \( u_i^k \in \mathcal{C} \))
is convex for any convex set \( \mathcal{C} \).
In fact, we may impose this constraint for a collection of disturbance signals while retaining convexity.
The last step in our problem formulation is to convert the decision variables from complex matrices to real vectors
by representing the real and imaginary parts of \( G_{p,i} \) and \( H_{p,i} \) as separate matrices
and then employing vectorization.
The resulting optimization problem can be represented in the abstract form
\begin{subequations} \label{eq:abmm}
	\begin{alignat}{2}
		& \underset{x_1, \ldots, x_N}{\textrm{minimize }}~ &&  
			\Big\lVert \sum\nolimits_{i \in \agents} D_i x_i - d \Big\rVert  ^2 \\
		& \textrm{subject to}~~~ && E_i x_i = e_i, ~ \forall i \in \agents \label{eq:abmm_eq} \\
		& && M_i x_i \leq m_i, ~ \forall i \in \agents \label{eq:abmm_ineq} \\
		& && \sum\nolimits_{i \in \agents} N_i x_i = 0 \label{eq:abmm_couple},
	\end{alignat}
\end{subequations}
where \( x_1, \ldots, x_N \) are the agent-specific decision variables,
while \( d, e_i, m_i \) and \( D_i, M_i, N_i \) are vectors and matrices of appropriate dimensions.
A detailed and centralized version of \eqref{eq:abmm} can be found in \cite{spa_dvpp_placeholder}.
The objective function corresponds to the original problem where
\( \sum_{i \in \agents} D_i x_i \) and \( d \) represent the finite-horizon aggregate and desired
impulse response, respectively.
Constraints \eqref{eq:abmm_eq}, \eqref{eq:abmm_ineq} express state, input and output constraints 
of the system under a collection of worst-case disturbances;
\eqref{eq:abmm_eq} also includes the SLS affine constraints.
Finally, coupling constraints among the devices are prescribed in \eqref{eq:abmm_couple}.

In this work, we aim at solving \eqref{eq:abmm} under the assumption that agents do not share
their individual problem data and only peer-to-peer communication is possible.
Formally, the communication network is described by
the graph \( \graphcom = (\agents, \edges) \), 
where nodes correspond to agents
and
\( \edges \subseteq \agents \times \agents \)
is the set of edges.
The pair \( (i,j) \subseteq \agents \times \agents \)
belongs to \( \edges \) if and only if
agents \( i \) and \( j \) can directly communicate.
We denote \( \agents_i := \{j \in \agents ~|~ (i,j) \in \edges \} \) the set and \( d_i := | \agents_i| \) the number
of neighbors of agent \( i \in \agents \).
\section{Main Results} \label{sec:dual_consensus}
In this section, we develop a distributed algorithm to solve a generalization of \eqref{eq:abmm} 
and provide a convergence certificate.
The algorithm and results are then specialized to \eqref{eq:abmm}.
\subsection{Dual Consensus ADMM}
For the sake of generality, we consider the problem
\begin{equation} \label{eq:admm_primal}
	\begin{alignedat}{2}
		& \underset{x_1, \ldots, x_N}{\textrm{minimize }}~ &&  
		\sum\nolimits_{i \in \agents}^{} f_i(x_i) + g\left( \sum\nolimits_{i \in \agents} Q_i x_i \right) \\
	\end{alignedat}
\end{equation}
where \( x_i \in \setR^{n_i}, Q_i \in \setR^{m \times n_i} \)
and let \( \boldsymbol{x} := [x_1; \ldots; x_N] \in \setR^n \) with \( n := \sum_{i \in \agents} n_i \).
Problem \eqref{eq:abmm} is a special case  of \eqref{eq:admm_primal} 
under the substitution
\( 	f_i  := \indic_{\feas_i} \label{subeq:local_feas1},~
	\feas_i  := \{ x_i \in \setR^{n_i} ~|~ E_i x_i = e_i, ~ M_i x_i \leq m_i \},~
	Q_i  := [D_i;N_i],~
	g\left( [z_1; z_2] \right)  := \norm{z_1 - d}^2 + \indic_{\{0\}}(z_2), \)
where the dimensions of vectors \( z_1, z_2 \) correspond to the
number of rows of  \( D_i \) and \( N_i \), respectively. 
We will study \eqref{eq:admm_primal} under the following assumption.
\begin{assumption} \label{ass:admm}
	\
	\begin{enumerate}
		\item \( g \in \Gamma_m \) and \( f_i \in \Gamma_n \), for all \( i \in \agents \).
		\item A primal-dual solution exists and strong duality holds.
		\item The graph \( \graphcom \) is connected and undirected.
		\hfill \( \square \)
	\end{enumerate}
\end{assumption}
For our problem \eqref{eq:abmm}, 
feasibility corresponds to stabilizability of each system by \cite[Lemma 4.2]{sls} and
implies Assumption 1.1 and 1.2, with the latter following by Slater's constraint qualification \cite[\S 5.2.3]{boyd_convex}
since all constraints are affine.

Solving \eqref{eq:admm_primal} with a distributed algorithm is challenging due to the 
coupling introduced by \( \sum\nolimits_{i \in \agents} Q_i x_i \).
In the sequel, we will solve the dual of \eqref{eq:admm_primal} using the ADMM-based distributed algorithm proposed
in \cite{consensus_admm}, and then recover a primal solution.
The method in \cite{consensus_admm} addresses the \textit{decomposed consensus} optimization problem 
\begin{equation} \label{eq:consensus}
	\begin{alignedat}{2}
		& \underset{y}{\textrm{minimize }} ~ && \sum\nolimits_{i \in \agents} \zeta_i(y) + \xi_i(y)
	\end{alignedat}	
\end{equation}
where \( \zeta_i, \xi_i \in \Gamma_m \) for all \( i \in \agents \).
The relevant consensus ADMM algorithm is outlined in \autoref{alg:decomposed_consensus}.
Intuitively, \autoref{alg:decomposed_consensus} is derived by applying ADMM to \eqref{eq:consensus}
while allowing agents to have local estimates of \( y \) and enforcing the estimates of neighboring
agents to coincide (see \cite[App. A.2]{dual_consensus_large} for a detailed derivation).
We employ this algorithm because the dual of \eqref{eq:admm_primal}
is a decomposed consensus problem, as we show in the subsequent analysis.
Our approach resembles and extends those in \cite{dual_consensus_zero}, \cite{dual_consensus_large},
where the authors respectively address the cases where \( g = \indic_{\{0\}} \) and \( g = \indic_{\mathcal{K}} \),
for some nonempty, closed and convex cone \( \mathcal{K} \).

To derive its dual, we rewrite \eqref{eq:admm_primal} as 
\begin{equation} \label{eq:admm_primal_eq}
	\begin{alignedat}{2}
		& \underset{w,x_1, \ldots, x_N}{\textrm{minimize }}~ &&  
		\sum\nolimits_{i \in \agents}^{} f_i(x_i) + g\left( w \right) \\
		& \textrm{subject to}~~~ && \sum\nolimits_{i \in \agents} Q_i x_i = w
	\end{alignedat}
\end{equation}
and the Lagrangian function for this problem reads
\begin{align*}
	\lagrangian(\boldsymbol{x},w,y) & = \sum_{i \in \agents} f_i(x_i) + g(w) + y^T \bigg(\sum_{i \in \agents} Q_i x_i - w \bigg) 
\end{align*}
where \( y \in \setR^m \) is the dual variable.
The dual function is
\begin{align*}
	h(y) & = \inf_{x,w} \lagrangian(x,w,y) \\
	& = \inf_{x} \Big\{ \sum_{i \in \agents} \left( f_i(x_i) + y^T Q_i x_i  \right) \Big\}
	+ \inf_{w} \left\{ g(w) - y^T w  \right\} \\
	& = - \sum\nolimits_{i \in \agents} \conj{f}_i( -Q_i^T y) - \conj{g}(y)
\end{align*}
thus giving rise to the dual problem
\begin{equation} \label{eq:admm_dual}
	\begin{alignedat}{2}
		& \underset{y}{\textrm{maximize }}~ && - 
		\sum\nolimits_{i \in \agents} \left( \conj{f}_i( -Q_i^T y) + \frac{1}{N} \conj{g}(y) \right)
	\end{alignedat},
\end{equation}
which clearly belongs to the family of \eqref{eq:consensus}.
Consequently, \autoref{alg:decomposed_consensus} is applicable
with \( \zeta_i := \conj{f}_i \circ ( -Q_i^T ) \) and
\( \xi_i := \frac{1}{N} \conj{g} \).
\begin{algorithm}
	\caption{Decomposed Consensus ADMM}
	\begin{algorithmic}[1]
		\State \textbf{choose} \( \sigma, \rho > 0 \)
		\State \textbf{initialize} for all \( i \in \agents \):
		\(  p_i^0 = 0,  y_i^0, z_i^0, s_i^0 \in \setR^m \)
		\Repeat: for all \(  i \in \agents \)
		\State Exchange \( y_i^k \) with neighbors \( \agents_i \) 
		\State \( p_i^{k+1} \leftarrow p_i^k + \rho \sum_{j \in \agents_i} \left( y_i^k - y_j^k \right)  \) 
		\State \( s_i^{k+1} \leftarrow s_i^k + \sigma (y_i^k - z_i^k) \)
		\State \label{step:consensus_zeta} 
		\( y_i^{k+1} \leftarrow \argmin_{y_i} \bigl\{ \zeta_i(y_i) + y_i^T(p_i^{k+1} + s_i^{k+1}) \bigr. \hfill \) 
		\( ~~~~~~~~~~~~~~~~~~~~~ 
		\bigl. +\frac{\sigma}{2} \norm{y_i - z_i^k}^2 + \rho \sum_{j \in \agents_i} \norm{y_i - \frac{y_i^k + y_j^k}{2}}^2  \bigr\} \)
		\State 	\label{step:consensus_xi}
		\(  z_i^{k+1} \leftarrow \argmin_{z_i} \{ \xi_i(z_i) - z_i^T{s_i^{k+1}} + \frac{\sigma}{2} \norm{z_i - y_i^{k+1}}^2 \} \) 
		\Until{termination criterion is satisfied}
	\end{algorithmic}
	\label{alg:decomposed_consensus}
\end{algorithm}

Observe that, the functions \( \conj{f}_i, \conj{g} \) may not be known in closed form,
hence complicating steps \ref{step:consensus_zeta} and \ref{step:consensus_xi}
of \autoref{alg:decomposed_consensus}
which would require solving nested optimization problems.
To overcome this, we will reformulate these update rules in terms of the original
functions.
By completing the square, we can equivalently rewrite step \ref{step:consensus_xi} as
\begin{equation} \label{eq:proximal_reform}
	\begin{alignedat}{1}
		& \argmin\nolimits_{z_i} \Big\{ \frac{1}{N} \conj{g}(z_i) + \frac{\sigma}{2} \norm{z_i - \frac{s_i^{k+1} + \sigma y_i^{k+1}}{\sigma}}^2
		\Big\} \\
		& = \proxim{\conj{g}}^{N \sigma} \Big( \frac{s_i^{k+1}}{\sigma} + y_i^{k+1} \Big) \\
		& = \frac{s_i^{k+1}}{\sigma} + y_i^{k+1} - \frac{1}{N \sigma} \proxim{g}^{1/(N \sigma)} \Big(N  \big(s_i^{k+1} + \sigma y_i^{k+1}\big) \Big)
	\end{alignedat}
\end{equation}
where the last equality holds since 
\( x = \proxim{g}^{1/\gamma}(x) + \gamma \proxim{\conj{g}}^{\gamma}(x/\gamma) \)
by \cite[Th. 14.3(ii)]{convex_monotone2017}.

Similarly, square completion for step \ref{step:consensus_zeta} yields
\begin{equation*}
	\argmin_{y_i} \Big\{ \conj{f}_i(-Q_i^T y_i) + \frac{\sigma + 2 \rho d_i}{2}  \big\lVert y_i - \frac{1}{\sigma + 2 \rho d_i} r_i^{k+1} 
	\big\rVert ^2 \Big\}
\end{equation*}
where we defined for brevity \( r_i^{k+1} := \rho \sum_{j \in \agents}(y_i^k + y_j^k) + \sigma z_i^k - p_i^{k+1} - s_i^{k+1}  \).
Observe that the minimizer of the previous problem equals
\( \prox_{f_i^{\star} \circ \left( - Q_i^T \right)}^{\sigma + 2 \rho d_i} \left(\frac{r_i^{k+1}}{\sigma + 2 \rho d_i} \right) \)
and using \cite[Lemma B.1]{dual_consensus_large} with \( \mathcal{C} := \setR^m \) the previous problem admits the solution
\begin{equation} \label{eq:new_y_update}
	 y_i^{k+1} \leftarrow \frac{1}{\sigma + 2 \rho d_i} \left ( Q_i x_i^{k+1} +  r_i^{k+1} \right )
\end{equation}
introducing the auxiliary variable
\begin{equation} \label{eq:x_update}
	x_i^{k+1} \in \argmin_{x_i} \Big\{ f_i(x_i) + \frac{1}{2 (\sigma + 2 \rho d_i)} \norm{Q_i x_i + r_i^{k+1}}^2 \Big\}.
\end{equation}
Note that \eqref{eq:x_update} is guaranteed to be feasible but may, in general, admit an unbounded solution set.
This can lead to the situation described in the introduction, where the primal variables have no limit points 
and instead diverge to infinity.
To alleviate this, we introduce the artificial constraint \( \norm{x_i}_{\infty} \leq M_i \) for \( M_i > 0 \).
The parameters \( M_i \) need not be known in advance since we may increase \( M_i \) 
whenever the additional constraint renders the problem 
infeasible or the resulting solution is suboptimal for the original problem,
as will be shown in \autoref{alg:dual_conensus_admm}.
To do so, we solve \eqref{eq:x_update} both with and without the artificial constraint \( \norm{x_i}_\infty \leq M_i \) 
and compare the results.  
If \eqref{eq:x_update} with the artifical constraint is infeasible or has higher cost than \eqref{eq:x_update} without 
the artificial constraint, then we know we have not yet found an optimal solution to \eqref{eq:x_update}, 
so we double \( M_i \) and repeat this procedure.  
The result is that the solution of \eqref{eq:x_update} with the artifical constraint will converge to an optimal 
solution of \eqref{eq:x_update} without this artifical constraint exponentially fast.
This observation along with the following assumption will be used to show 
boundedness of \( x^{k}_i \) for all \( i \in \agents, k \in \setN \)
in our convergence analysis.
Then, we will be able to show that the limit set of the primal variables is nonempty, 
and that the primal variables converge to this limit set, which will guarantee convergence of our algorithm.
\begin{assumption} \label{ass:minim_contin}
	For each \( i \in \agents \) and \( \tilde{r}_i \in \setR^m \) there exist \( R > 0 \) and \( M_i \) finite, such that 
	\( \norm{r_i^{k+1} - \tilde{r}_i} \leq R \) implies that 
	the solution set of \eqref{eq:x_update} intersects the
	set \( \{x_i \in \setR^{n_i} ~|~ \norm{x_i}_{\infty} \leq M_i \}\).
	\hfill \( \square \)
\end{assumption}

Assumption \ref{ass:minim_contin} is fairly weak and holds for a large class of problems, including \eqref{eq:abmm}.

Applying \eqref{eq:proximal_reform}-\eqref{eq:x_update} to \autoref{alg:decomposed_consensus}
yields \autoref{alg:dual_conensus_admm},
where subproblems have been reformulated in terms of the original functions
and, therefore, the structure of the primal \eqref{eq:admm_primal} is preserved.
\autoref{alg:dual_conensus_admm} also includes the additional infinity norm constraints.
Note that, if the solution set of \eqref{eq:x_update} is bounded
the additional step \ref{step:x_update} can be disregarded.
\begin{algorithm}
	\caption{Dual consensus ADMM for \eqref{eq:admm_primal}.}
	\begin{algorithmic}[1]
		\State \textbf{choose} \( \sigma, \rho > 0 \)
		\State \textbf{initialize} for all \( i \in \agents \):
		\(  p_i^0 = 0, ~ y_i^0, z_i^0, s_i^0 \in \setR^m \)
		\Repeat: for all \(  i \in \agents \):
		\State exchange \( y_i^k \) with neighbors \( \agents_i \) 
		\State \( p_i^{k+1} \leftarrow p_i^k + \rho \sum_{j \in \agents_i} \left( y_i^k - y_j^k \right)  \) 
		\State \( s_i^{k+1} \leftarrow s_i^k + \sigma (y_i^k - z_i^k) \) \label{step:s_update}
		\State \( r_i^{k+1} \leftarrow \rho \sum_{j \in \agents_i}(y_i^k + y_j^k) + \sigma z_i^k - p_i^{k+1} - s_i^{k+1}  \)
		\State \label{step:x_preupdate}Compute the optimal value of \eqref{eq:x_update} 
		\State \label{step:x_update}
		\( x_i^{k+1} \leftarrow \text{Update using \eqref{eq:x_update}} \text{ with } \norm{x_i}_{\infty} \leq M_i. \hfill \)
		~~~~~~~~~~~~~~~~~If infeasible or suboptimal: \( M_i \leftarrow 2 M_i \), repeat
		Step \ref{step:x_update}.
		\State \label{step:y_update}
		\( y_i^{k+1} \leftarrow \text{Update using \eqref{eq:new_y_update}} \)
		\State \label{step:z_update}
		\( z_i^{k+1} \leftarrow \text{Update using \eqref{eq:proximal_reform}} \)
		\Until{termination criterion is satisfied}
	\end{algorithmic}
	\label{alg:dual_conensus_admm}
\end{algorithm}
\subsection{Distributed Optimal Control Design}
In this subsection, we specialize \autoref{alg:dual_conensus_admm} to \eqref{eq:abmm}
and exploit its structure to derive more efficient iterations.
In particular, using \( f_i = \indic_{\feas_i} \)
where \( \feas_i \) are polyhedral, we recognize that the problems
in steps \eqref{step:x_preupdate} and \eqref{step:x_update} are QPs.
Similarly, the proximal involved in step \ref{step:z_update} can be computed
in closed form
\begin{equation*}
	\proxim{g}^{1/(N \sigma) }([z_1;z_2]) = \bigg[\frac{2 N \sigma}{2 N \sigma + 1}d + \frac{1}{2 N \sigma +1} z_1;0 \bigg].
\end{equation*}
Thus, our algorithm requires each agent to solve just two QPs per iteration
for large enough \( M_i \).

\subsection{Convergence Analysis}
Next, we will establish convergence of \autoref{alg:dual_conensus_admm} and 
show how a primal solution can be recovered.
Firstly, we recall several useful statements from convex analysis.
\begin{lemma} \label{lemma:fermat}
	Let \( y,z \in \setR^n, f \in  \Gamma_n \) and consider the sequence
	\( (x^k, u^k)_{k \in \setN} \) such that \(  x^k \to x, u^k \to u \)
	and \( u^k \in \partial f(x^k) \) for all \( k \in \setN \). Then,
	\begin{enumerate}
	\item \( 	\argmin f = \{x \in \setR^n ~|~ 0 \in \partial f(x)\} \) \cite[Th. 16.3]{convex_monotone2017},
	\item \( y \in \partial \conj{f}(z) \iff z \in \partial f(y) \) \cite[Cor. 16.30]{convex_monotone2017},
	\item \( u \in \partial f(x) \) \cite[Cor. 16.36]{convex_monotone2017}.
	\hfill \( \square \)
	\end{enumerate}
\end{lemma}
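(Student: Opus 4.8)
All three claims are classical facts of convex analysis, so the plan is to reduce each to the definitions of the conjugate and subdifferential recalled in the notation paragraph, supplemented by the Fenchel--Young and biconjugation relations for functions in \( \Gamma_n \).

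For the first identity I would simply read off the definitions: \( x \in \argmin f \) means \( f(y) \ge f(x) \) for every \( y \in \setR^n \), which is literally the subgradient inequality \( f(y) \ge f(x) + 0^T(y-x) \) for every \( y \), i.e.\ \( 0 \in \partial f(x) \). No convexity is needed beyond the definition itself.

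For the second equivalence the plan is to exploit the tightness characterization of the Fenchel--Young inequality. Unwinding \( z \in \partial f(y) \), the inequality \( f(w) \ge f(y) + z^T(w-y) \) for all \( w \) rearranges to \( z^T y - f(y) = \sup_w \{ z^T w - f(w) \} = \conj{f}(z) \), so \( z \in \partial f(y) \) is equivalent to the equality \( f(y) + \conj{f}(z) = z^T y \). The key step is then to invoke biconjugation, \( \conj{\conj{f}} = f \) for \( f \in \Gamma_n \): applying the same characterization to \( \conj{f} \in \Gamma_m \) shows that \( y \in \partial \conj{f}(z) \) is equivalent to \( \conj{f}(z) + f(y) = z^T y \). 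Since the two equality conditions coincide, the claimed equivalence follows.

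The third statement is the one I expect to carry the real content, and the plan is a direct \( \liminf \) argument. Fixing any \( y \in \setR^n \), for every \( k \) the hypothesis \( u^k \in \partial f(x^k) \) gives \( f(y) \ge f(x^k) + (u^k)^T(y - x^k) \). Because \( u^k \to u \) and \( x^k \to x \), the affine term converges, \( (u^k)^T(y - x^k) \to u^T(y-x) \), while closedness of \( f \in \Gamma_n \) (hence lower semicontinuity) yields \( \liminf_k f(x^k) \ge f(x) \). Passing to the \( \liminf \) and using \( \liminf(a_k + b_k) \ge \liminf a_k + \lim b_k \) for a convergent \( (b_k) \) produces \( f(y) \ge f(x) + u^T(y-x) \) for every \( y \), which is exactly \( u \in \partial f(x) \). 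The delicate point, and the main obstacle, is that lower semicontinuity supplies only \( \liminf_k f(x^k) \ge f(x) \) and \emph{not} convergence \( f(x^k) \to f(x) \); the argument succeeds precisely because the subgradient inequality is one-sided, so no upper bound on \( f(x^k) \) is ever required. Properness additionally guarantees \( f(x) < +\infty \), since otherwise the derived inequality would force \( f \equiv +\infty \). As an alternative I could instead cite that the graph of \( \partial f \) is sequentially closed, a consequence of the maximal monotonicity of \( \partial f \) for \( f \in \Gamma_n \).
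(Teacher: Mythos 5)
Your three arguments are all correct, but note that the paper does not actually prove this lemma at all: each item is dispatched by citation to \cite{convex_monotone2017}, so what you have done is reconstruct the standard textbook proofs that the paper delegates. The comparison is therefore between self-contained arguments and pure citation. Your item 1 is indeed purely definitional, and your observation that no convexity is needed is accurate given the paper's stated definition of \( \partial f \). Your item 2 is the classical route: reduce \( z \in \partial f(y) \) to the Fenchel--Young equality \( f(y) + \conj{f}(z) = z^T y \), then apply the same characterization to \( \conj{f} \) and invoke biconjugation \( \conj{(\conj{f})} = f \). You correctly isolate where membership in \( \Gamma_n \) genuinely matters: Fenchel--Moreau biconjugation is the only step requiring closedness and convexity, whereas items 1 and 3 use only properness and lower semicontinuity. (Minor slip: the conjugate of \( f \in \Gamma_n \) lies in \( \Gamma_n \), not \( \Gamma_m \); also, strictly one should note that both sides of the equivalence fail vacuously when \( f(y) = +\infty \), so the rearrangement is legitimate.) Your item 3 is exactly the standard proof of sequential closedness of the graph of \( \partial f \), and your remark that the argument survives with only the one-sided bound \( \liminf_k f(x^k) \geq f(x) \) is precisely the right point --- this is why no continuity of \( f \) along \( (x^k)_{k \in \setN} \) is needed. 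Of your two suggested routes for item 3, the direct \( \liminf \) argument is preferable to citing maximal monotonicity of \( \partial f \), since the latter (Moreau--Rockafellar) is a substantially deeper result than the statement being proved. In the context of the paper, the citations suffice; your proofs add nothing the reference does not already contain, but they are faithful and complete.
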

We will employ the previous lemma to prove our main result.
\begin{theorem}
	The iterates \( ((y_i^{k})_{i \in \agents})_{k \in \setN}, ((z_i^{k})_{i \in \agents})_{k \in \setN} \) 
	of \autoref{alg:dual_conensus_admm}
	converge to a maximizer of the dual \eqref{eq:admm_dual},
	while \( ((x_i^{k})_{i \in \agents})_{k \in \setN} \) converges to the set of minimizers of the primal \eqref{eq:admm_primal}.
	\hfill \( \square \)
\end{theorem}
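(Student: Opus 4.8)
The plan is to exploit the fact that \autoref{alg:dual_conensus_admm} is exactly \autoref{alg:decomposed_consensus} applied to the dual \eqref{eq:admm_dual}, derive convergence of $(y_i^k),(z_i^k)$ from consensus-ADMM theory, and then recover a primal optimum from the limiting iterates using the Fermat-type rules of \autoref{lemma:fermat}. First I would record the dual convergence. By the reformulations \eqref{eq:proximal_reform}--\eqref{eq:x_update}, the sequences $(y_i^k)$ and $(z_i^k)$ generated by \autoref{alg:dual_conensus_admm} coincide with those of \autoref{alg:decomposed_consensus} run on \eqref{eq:admm_dual}, i.e.\ with $\zeta_i=\conj{f}_i\circ(-Q_i^T)$ and $\xi_i=\tfrac1N\conj{g}$. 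Under Assumption~\ref{ass:admm} these belong to $\Gamma_m$ and a saddle point exists, so the convergence theory of consensus ADMM \cite{consensus_admm} applies: since $\graphcom$ is connected, $y_i^k\to y^\star$ and $z_i^k\to y^\star$ for every $i\in\agents$, where $y^\star$ minimizes $\sum_{i}(\zeta_i+\xi_i)$ and hence maximizes \eqref{eq:admm_dual}; moreover the multipliers converge, $p_i^k\to\bar p_i$ and $s_i^k\to\bar s_i$. Summing the $p$-update over $\agents$ and using that $\graphcom$ is undirected shows $\sum_{i}p_i^k=0$ for all $k$ (as $p_i^0=0$), whence $\sum_{i}\bar p_i=0$; I will need this below.

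Next I would show $(x_i^k)$ is bounded. As $y_i^k,z_i^k,p_i^k,s_i^k$ all converge, $r_i^{k+1}$ converges and is therefore eventually within distance $R$ of its limit, so Assumption~\ref{ass:minim_contin} provides a finite $M_i$ whose box meets the solution set of \eqref{eq:x_update}. Hence the doubling rule in step~\ref{step:x_update} terminates, the accepted $x_i^{k+1}$ is a genuine minimizer of \eqref{eq:x_update}, and $(x_i^k)$ is bounded; this lets me pass to convergent subsequences.

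The core of the argument is to verify that every subsequential limit $\bar x_i$ of $(x_i^k)$ is primal optimal, by checking the KKT system of \eqref{eq:admm_primal_eq}. Writing the optimality condition of \eqref{eq:x_update} via \autoref{lemma:fermat}(1) and substituting \eqref{eq:new_y_update} yields $-Q_i^T y_i^{k+1}\in\partial f_i(x_i^{k+1})$; along $x_i^{k_j}\to\bar x_i$, since $-Q_i^T y_i^{k_j}\to-Q_i^T y^\star$, \autoref{lemma:fermat}(3) gives the stationarity condition $-Q_i^T y^\star\in\partial f_i(\bar x_i)$. For feasibility I would use \eqref{eq:new_y_update} once more to write $Q_i x_i^{k+1}=(\sigma+2\rho d_i)y_i^{k+1}-r_i^{k+1}$; substituting the definition of $r_i^{k+1}$ and taking limits, the $y^\star$-terms cancel and $Q_i x_i^{k+1}\to\bar p_i+\bar s_i$. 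Separately, the optimality condition of the $z$-update \eqref{eq:proximal_reform} reads $N\sigma(v_i^{k+1}-z_i^{k+1})\in\partial\conj{g}(z_i^{k+1})$ with $v_i^{k+1}=s_i^{k+1}/\sigma+y_i^{k+1}$; its left-hand side tends to $N\bar s_i$ and $z_i^{k+1}\to y^\star$, so \autoref{lemma:fermat}(3) gives $N\bar s_i\in\partial\conj{g}(y^\star)$ for each $i$.

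It remains to assemble these facts, and this is where I expect the main difficulty. Summing $Q_i x_i^{k+1}\to\bar p_i+\bar s_i$ over $\agents$ and using $\sum_i\bar p_i=0$ gives $\sum_i Q_i\bar x_i=\sum_i\bar s_i=:w^\star$. The obstacle is that the local multipliers $\bar s_i$ need not agree across agents, so one cannot directly exhibit a single $w^\star$ satisfying $y^\star\in\partial g(w^\star)$. The resolving observation is that $\partial\conj{g}(y^\star)$ is convex and each $N\bar s_i$ lies in it, so their average $\tfrac1N\sum_i N\bar s_i=\sum_i\bar s_i=w^\star$ also lies in $\partial\conj{g}(y^\star)$; by \autoref{lemma:fermat}(2) this is equivalent to $y^\star\in\partial g(w^\star)$. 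Together with the stationarity conditions and $\sum_i Q_i\bar x_i=w^\star$, this is precisely the KKT system of \eqref{eq:admm_primal_eq}, so $\bar x=(\bar x_i)_{i\in\agents}$ is a minimizer of \eqref{eq:admm_primal}. Finally, since $(x_i^k)$ is bounded and all of its subsequential limits lie in the closed solution set, it converges to that set in the sense defined in the notation, which completes the proof. A secondary technical point worth checking carefully is the boundedness step, for which Assumption~\ref{ass:minim_contin} together with the convergence of $r_i^{k+1}$ is exactly what is required.
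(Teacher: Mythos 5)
Your proposal is correct and follows essentially the same route as the paper's proof: dual convergence of \( (y_i^k),(z_i^k) \) from the consensus-ADMM/Douglas--Rachford theory, verification of the saddle-point (KKT) conditions using the three parts of \autoref{lemma:fermat}, boundedness of the primal iterates via the convergence of \( r_i^k \) together with Assumption~\ref{ass:minim_contin} and the doubling rule, and finally convergence of \( \boldsymbol{x}^k \) to its (optimal) \( \omega \)-limit set. The only difference is bookkeeping: you pass to per-agent multiplier limits \( \bar p_i,\bar s_i \) and then average the inclusions \( N\bar s_i \in \partial \conj{g}(y^\star) \) using convexity of the subdifferential, whereas the paper works directly with the aggregate \( w^k = \sum_i s_i^k \) and \( \sum_i p_i^k = 0 \), summing the subdifferential inclusions before taking limits --- the same argument in a slightly different order.
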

\begin{proof}
	Initially, we recall that \autoref{alg:dual_conensus_admm} is derived by applying ADMM to 
	\eqref{eq:consensus} and that ADMM is a special case of the Douglas-Rachford splitting algorithm \cite{admm_drs}.
	Invoking \cite[Cor. 28.3]{convex_monotone2017} it holds
	\( 	y^k_i \to y^{\star}, ~ z^k_i \to y^{\star} \) for all \( i \in \agents \),
	where \( y^{\star} \) is a maximizer of \eqref{eq:admm_dual}.
	Further, we note that the first-order optimality conditions for a primal-dual solution of \eqref{eq:admm_primal} 
	correspond to saddle-points of the Lagrangian and, hence, are given by
	\begin{subequations} \label{eq:optimality}
		\begin{align} 
			0 & \in \partial_{x_i} \lagrangian(x,w,y) = \partial f_i(x_i) + Q_i^T y, ~\forall i \in \agents
			\label{eq:optimality_1} \\
			0 & \in \partial_{w} \lagrangian(x,w,y) = \partial g(w) - y \label{eq:optimality_2} \\
			0 & = \grad_y \lagrangian(x,w,y) = \sum\nolimits_{i \in \agents} Q_i x_i - w. \label{eq:optimality_3}
		\end{align} 
	\end{subequations}
	We denote \( \boldsymbol{x}^k := (x^k_i)_{i \in \agents}, \boldsymbol{x}^{\star} \in \omega(\boldsymbol{x}^k), 
	w^{k} := \sum_{i \in \agents} s_i^{k} \) and \( w^{\star}:=\lim_{k \to \infty}w^k \) which is well-defined by
	step \ref{step:s_update} since \( y^k_i, z^k_i \to y^{\star} \).
	In the sequel, we will show that \( (\boldsymbol{x}^{\star}, y^{\star}, w^{\star}) \) satisfies \eqref{eq:optimality}
	which implies that \( \boldsymbol{x}^{\star} \) is a primal minimizer for every 
	\( \boldsymbol{x}^{\star} \in \omega(\boldsymbol{x}^k) \).
	
	Consider the minimization involved in step \ref{step:x_update} and note
	that by the choice of \( x^{k+1} \) in step \ref{step:x_update} it is a minimizer of \eqref{eq:x_update}.
	Employing \autoref{lemma:fermat}.1 yields
	\begin{align*}
		0 & \in \partial f_i(x_i^{k+1}) + \frac{1}{\sigma + 2 \rho d_i} Q_i^T \left( Q_i x_i^{k+1} + r_i^{k+1} \right) \\
		& = \partial f_i(x_i^{k+1}) + Q_i^T y_i^{k+1}
	\end{align*}
	which corresponds to \eqref{eq:optimality_1} being satisfied in each iteration and,
	by \autoref{lemma:fermat}.3, \eqref{eq:optimality_1} is satisfied by \( (\boldsymbol{x}^{\star}, y^{\star}) \).
	Next, invoking \autoref{lemma:fermat}.1 for step \ref{step:z_update} in the form of
	\eqref{eq:proximal_reform}, summing over all \( i \in \agents \) and rearranging terms we obtain
	\begin{align*}
		w^{k+1} - \frac{1}{N} \sum_{i \in \agents} \sigma (z_i^{k+1} - y_i^{k+1} ) \in \frac{1}{N} \sum_{i \in \agents} \partial \conj{g}(z_i^{k+1}).
	\end{align*}
	Then, taking the limit and utilizing \autoref{lemma:fermat}.3 results in
	\begin{align*}
		\lim_{k \to \infty} w^{k+1} \in \frac{1}{N} \sum_{i \in \agents}  \partial \conj{g}\Big(\lim_{k \to \infty} z_i^{k+1}\Big)
		& \implies w^{\star} \in \partial \conj{g}(y^{\star})  \\
		& \implies y^{\star} \in \partial g(w^{\star})
	\end{align*}
	where the first implication stems from \( z^k_i \to y^{\star} \) while the second follows 
	from \autoref{lemma:fermat}.2; thus, showing \eqref{eq:optimality_2}.
	
	Finally, substituting \( r_i^{k+1} \) in step \ref{step:y_update} and summing over
	all \( i \in \agents \) it holds
	\begin{align*}
		\sum_{i \in \agents} Q_i x_i^{k+1} - w^{k+1} & = \sigma \sum_{i \in \agents} (y_i^{k+1} - z_i^k)
		+ \sum_{i \in \agents} p_i^{k+1}  \\
		& ~~~~~ + \rho \sum_{i \in \agents} \sum_{j \in \agents_i} (2 y_i^{k+1} - y_i^k - y_j^k)
	\end{align*}
	where \( \sum_{i \in \agents} p_i^{k+1} = 0 \) for all \( k \in \setN \), by properties of the consensus ADMM algorithm
	\cite[App. A.1]{dual_consensus_large}.
	Hence, we see that the right-hand side vanishes as \( k \to \infty \) thus implying
	\( \sum_{i \in \agents} Q_i x_i^{\star} = w^{\star}\),
	which amounts to \eqref{eq:optimality_3}.
	Therefore, we established that \( (\boldsymbol{x}^{\star}, y^{\star}, w^{\star}) \) satisfies 
	\eqref{eq:optimality}.
	
	To conclude the proof, it suffices to show that \( \boldsymbol{x}^k \to \omega(\boldsymbol{x}^k) \)
	since every \( \boldsymbol{x}^{\star} \in \omega(\boldsymbol{x}^k) \) is primal optimal.
	Since \( y_i^k, z_i^k \to y^{\star} \) and \( p_i^k \) and \( s_i^k \) also converge to limits, 
	we have that \( r_i^k \) converges to a limit, say \( r_i^{\star} \),
	and by definition there exists \( \overline{N} \) finite such that \( k \geq \overline{N} \) implies
	\( \norm{r_i^k - r_i^{\star}} \leq R \) for all \( i \in \agents \).
	Therefore, by Assumption \autoref{ass:minim_contin} and step \ref{step:x_update}, we deduce that \( (\boldsymbol{x}^k)_{k \geq \overline{N}} \) is bounded.
	As \( \overline{N} \) is finite, \( (\boldsymbol{x}^k)_{k \in \setN} \) is also bounded and thus 
	contained in a compact set,
	which implies that \( \omega(\boldsymbol{x}^k) \) is nonempty and
	\( \boldsymbol{x}^k \to \omega(\boldsymbol{x}^k) \) by virtue of \cite[App. A.2]{khalil1996nonlinear},
	hence concluding the proof.
\end{proof}
\begin{remark}
	The prior works \cite{dual_consensus_zero} and \cite{dual_consensus_large} 
	ensure primal optimality of any limit point of \( ((x_i^{k})_{i \in \agents})_{k \in \setN} \)
	for their algorithms;
	nonetheless, as mentioned above limit points are not guaranteed to exist for their algorithms, 
	so the set of primal solutions can diverge to infinity in those cases.
	To see this, recall that the optimization problem in update rule \eqref{eq:x_update} need not be 
	strongly convex, e.g., when \( Q_i \) has a nontrivial null space, which implies that
	\( x_i^{k+1} \) can diverge to infinity.
	Our Assumption \autoref{ass:minim_contin} along with step \ref{step:x_update} of \autoref{alg:dual_conensus_admm}
	preclude this behavior and establish the existence of limit points of \( ((x_i^{k})_{i \in \agents})_{k \in \setN} \),
	thus ensuring convergence of the primal variables to the set of primal minimizers.
\end{remark}

\section{Test Case} \label{sec:test_case}
To demonstrate the effectiveness of our proposed control design, we will consider
the control of an aggregation of DERs, referred to as virtual power plant (VPP), 
including wind turbines (WTs), photovoltaics (PVs) and energy storage devices (ESs),
that collectively provide fast frequency regulation to the power grid.

\begin{figure*}[!t]
	\vspace{5pt}
	\begin{subfigure}{0.33\textwidth}
		\centering
		\includegraphics[width=0.95\linewidth]{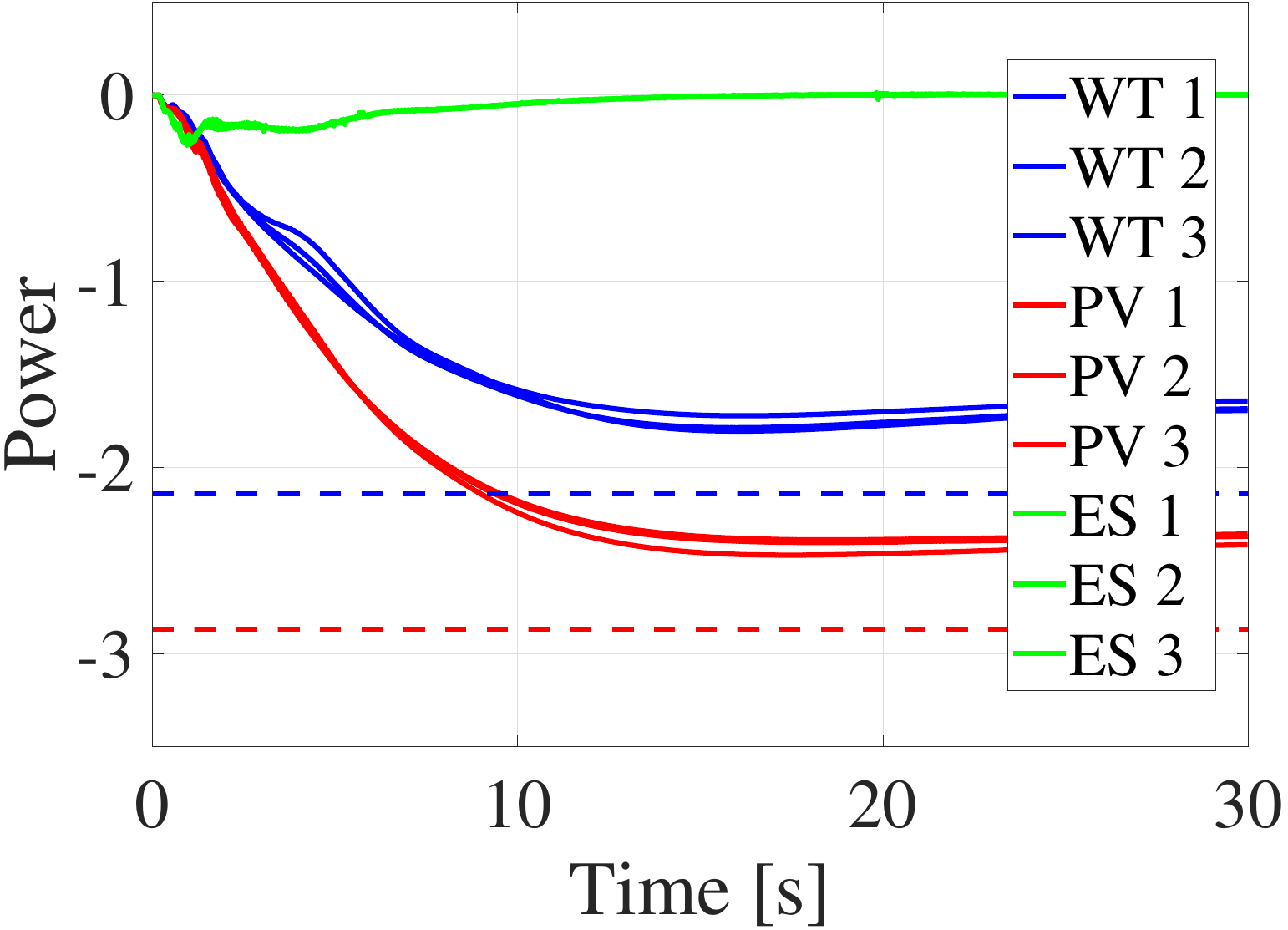}
		\caption{Scenario 1}
		\label{subfig:perfect_dec}
	\end{subfigure}
	\begin{subfigure}{0.33\textwidth}
		\centering
		\includegraphics[width=0.95\linewidth]{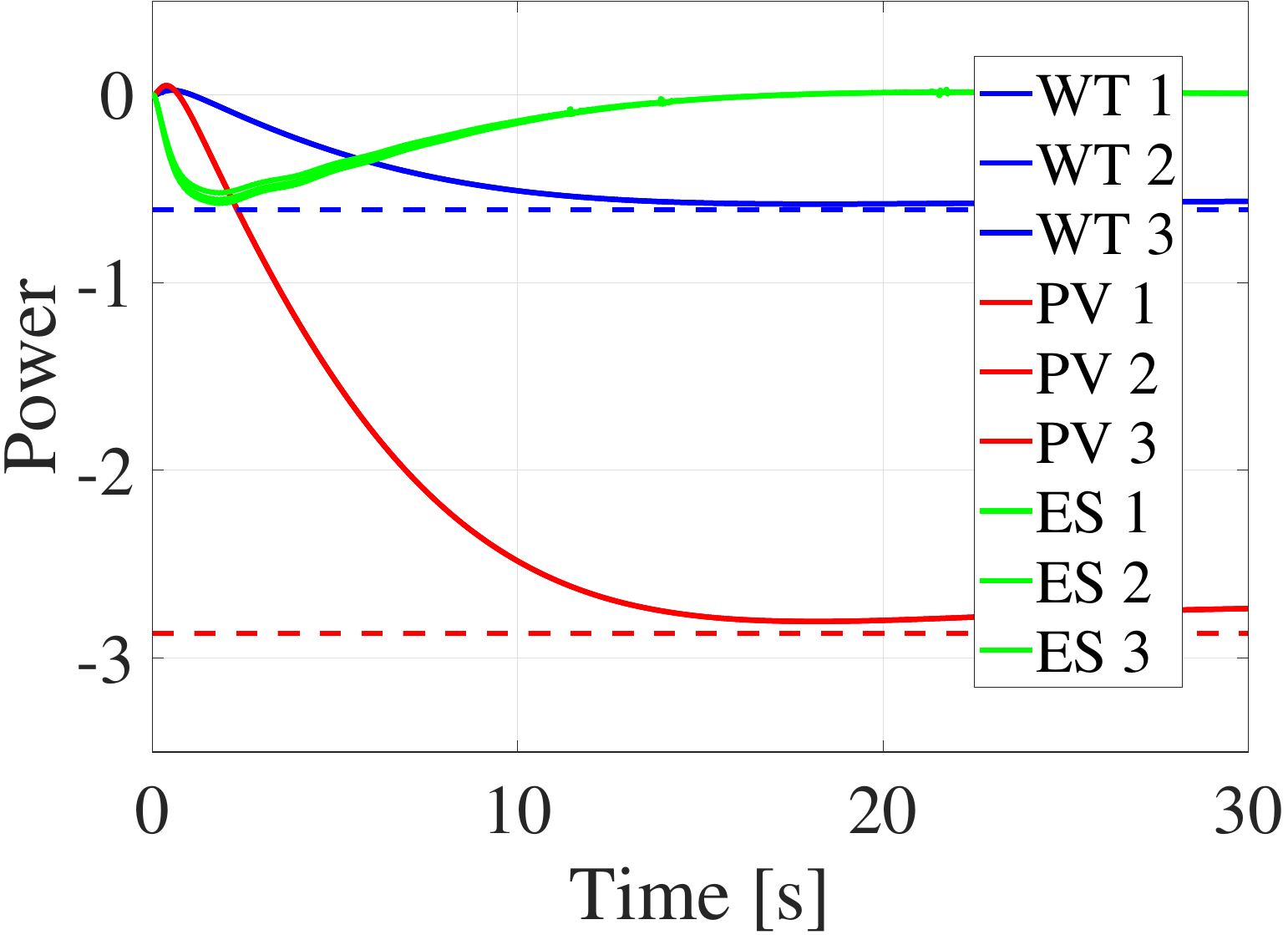}
		\caption{Scenario 2}
		\label{subfig:imperfect_dec}
	\end{subfigure}
	\hfill
	\begin{subfigure}{0.33\textwidth}
		\centering
		\includegraphics[width=0.95\linewidth]{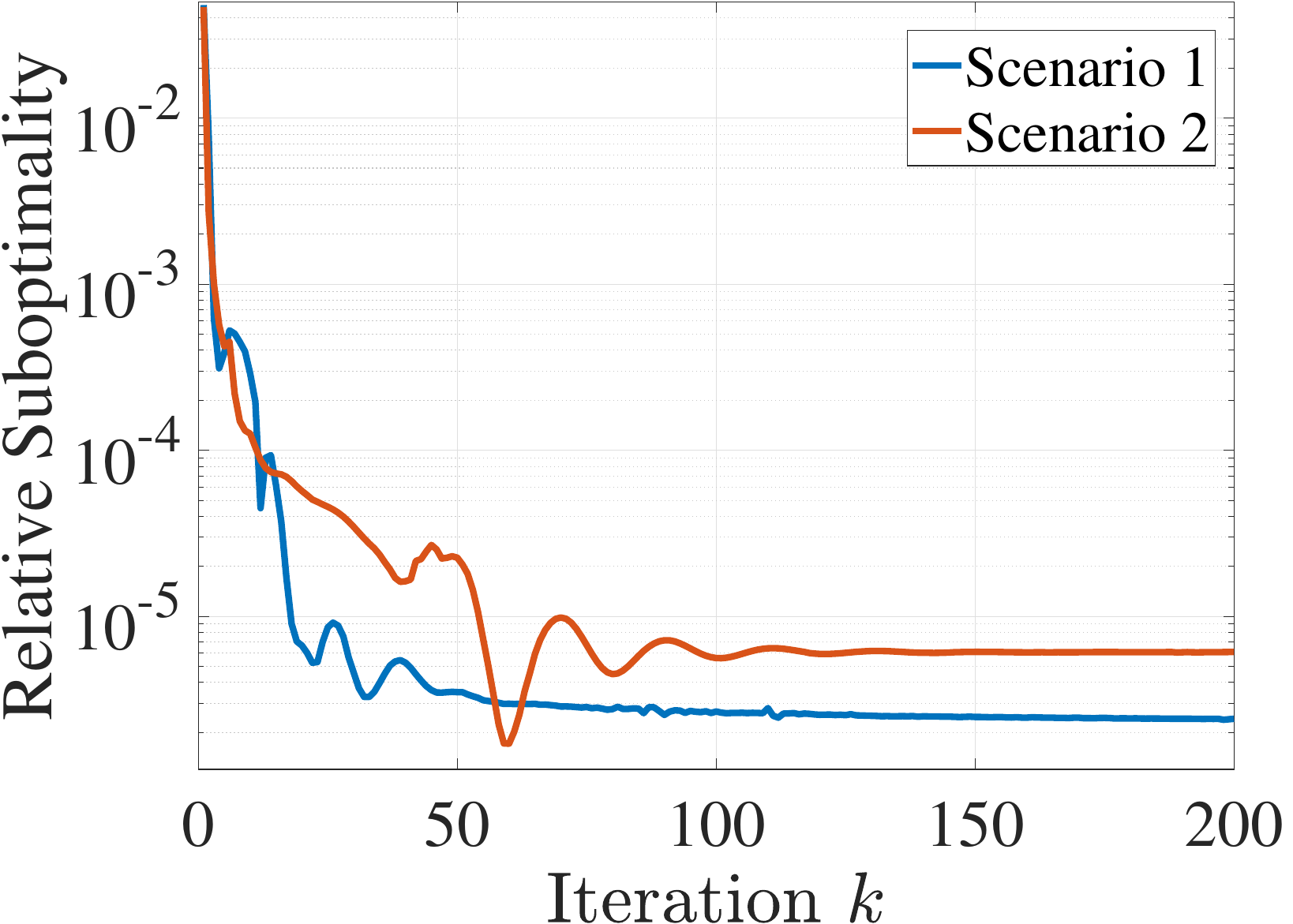}
		\caption{Relative Suboptimality}
		\label{subfig:suboptimality}
	\end{subfigure}
	\caption{(a), (b) Power output of each device \( P_i^k \), as deviation from its setpoint.
		Dashed lines correspond to power constraints \(  |P^k_i| \leq \overline{P}_i \) for devices of the same color 
		(ES constraints exceed axis limits).
		(c) Convergence rate of \autoref{alg:dual_conensus_admm}.}
	\label{fig:powers}
\end{figure*}

\subsection{Power Grid Model}
We adopt a first-order system representation for both the grid frequency 
and the power of the grid of the form
\begin{align}
	\omega^{k+1} & = \omega^k + 0.083(P_{g}^k + P_{vpp}^k + P_{ext}^k) \\
	{P}_{g}^{k+1} & = 0.9944~P_{g}^k +0.0015~\omega^k
\end{align}
where \( \omega^k \) is the frequency of the grid,
\( P_{sg}^k, P_{vpp}^k \) denote the power of the
grid and the VPP,
whereas \( P_{ext}^k \) models external power injections or losses
and is a step function of size 70.
All variables correspond to deviations from setpoints.

\subsection{DER Model}
For individual DERs, we discretize the continuous-time model in \cite{verena_dvpp}
which results in the following discrete-time LTI description
\begin{equation*}
	\renewcommand*{\arraystretch}{0.9}
	\begin{aligned}
		x^{k+1}_i  &= \Vector{1 -  \frac{hk_{I}}{k_{P}} & 0 & 0 \\
			1 & 0 & 0 \\
			0 & 0 & \frac{\tau_i-h}{\tau_i} }
		x^k_i +	
		\Vector{0 \\ 0 \\ \frac{h}{\tau_i}} u^k
		+ \Vector{\frac{-h k_{I}}{k_{P}^2} \\ \frac{1}{k_{P}} \\ 0} \omega^k \\
		P^k_i & = \Vector{0 & 1025 & -143} x^k
	\end{aligned}
\end{equation*}
where \( k_I = 1700, k_P = 150 \) are constants and
\( h = 0.0167 \) is the discretization time step in seconds.
The parameter \( \tau_i \) is the time constant of each DER and is
equal to 1.3, 0.55 and 0.15 seconds for WTs, PVs and ESs, respectively.
For simplicity of exposition, parameters will be shared among DERs of the same type.
The power output of each device is \( P_i^k \) and it holds that
\( P_{vpp}^k = \sum_{i \in \agents} P_i^k. \)

Our considered VPP is composed of 3 WTs, 3 PVs
and 3 ESs communicating over a randomly-generated connected undirected graph.
We explicitly include communication and computation delays related to our distributed
algorithm in our simulations, by assuming that one iteration of \autoref{alg:dual_conensus_admm} is performed
every 6 time steps of the controlled system. 
This means that DER controllers are updated
significantly slower than the time scale of the faster system dynamics.

\subsection{Control Design Parameters}
The main component of our controller is the desired aggregate
behavior which we designate as a low-pass filter,
according to grid operator specifications, with time constant
0.1 and gain 53.1.
For \( \mathcal{P}_i \), we use 10 simple poles for each agent that include the poles of the desired transfer function,
the plant and the remaining are chosen along a spiral inside the unit disk as described in
\cite[Section \uppercase\expandafter{\romannumeral 4\relax}]{msls_placeholder}.
Engineering limitations on the device level are specified as upper
bounds on \( |P^k_i| \), denoted \( \overline{P}_i \),
under a worst-case frequency signal which we choose as a step input of size 0.25.
We set \( \overline{P}_i \) to 2.1, 2.9 and 27.5 for WTs, PVs and ESs, respectively,
which correspond to device-specific percentages of the nominal power \( \hat{P}_{i} \) of 
each device,
where the values for WTs and PVs are small because their nominal operation is close to their maximum power capacity.
We, additionally, constrain ESs to have zero output at steady-state, to avoid impacting their state of charge.
Finally, we enforce the contribution of WTs and PVs to the aggregate steady-state power output of the VPP to be proportional
to their nominal power, i.e., \( P_i^{\infty}/P_{vpp}^{\infty} = \hat{P}_{i} \), in order to achieve fair power sharing in
accordance with grid codes.
This specification is encapsulated as a coupling constraint of the form \eqref{eq:abmm_couple} and
ensures that the power deviation introduced to compensate \( P_{ext} \)
is distributed fairly among devices.

\subsection{Simulations}
In the following simulations, we utilize \autoref{alg:dual_conensus_admm} in an online fashion 
in the sense that device controllers are iteratively updated during system operation.
For comparison, we also demonstrate the response obtained via a centralized solution of \eqref{eq:abmm},
as well as that of the desired transfer function.
We set \( \rho = \sigma = 0.1 \) in \autoref{alg:dual_conensus_admm} for all simulations. 

In our first scenario, we deploy \autoref{alg:dual_conensus_admm} cold-started, i.e., with
\(  p_i^0 = y_i^0 = z_i^0 = s_i^0 = 0 \) for all \( i \in \agents \),
which corresponds to the challenging case of a disturbance occurring while local
controllers are being initialized.
We consider operating conditions where perfect model matching is feasible.
\autoref{subfig:perfect_dec} demonstrates the power output of each device
whereas \autoref{fig:perfect_all} compares the distributed, centralized and desired response 
for the aggregate power output \( P_{vpp}^k \).
Our distributed scheme achieves similar performance as the centralized approach,
with small oscillating during the initial transient (see \autoref{fig:perfect_all_detail}).
\begin{figure}[h]
	\vspace{5pt}
	\begin{subfigure}{0.24\textwidth}
		\centering
		\includegraphics[width=0.95\linewidth]{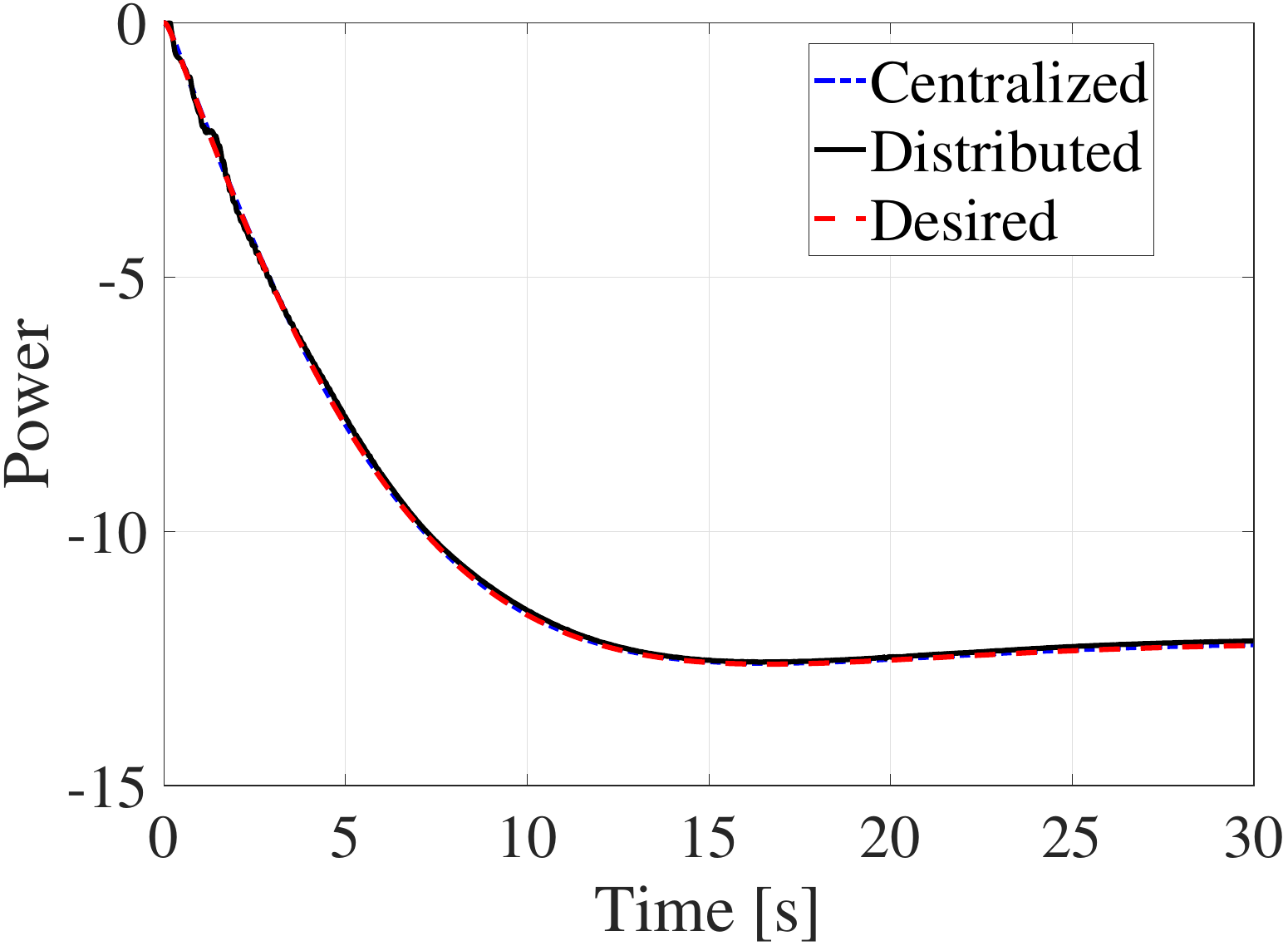}
		\caption{Scenario 1: Full simulation}
		\label{fig:perfect_all}
	\end{subfigure}
	\hfill
	\begin{subfigure}{0.24\textwidth}
		\centering
		\includegraphics[width=0.95\linewidth]{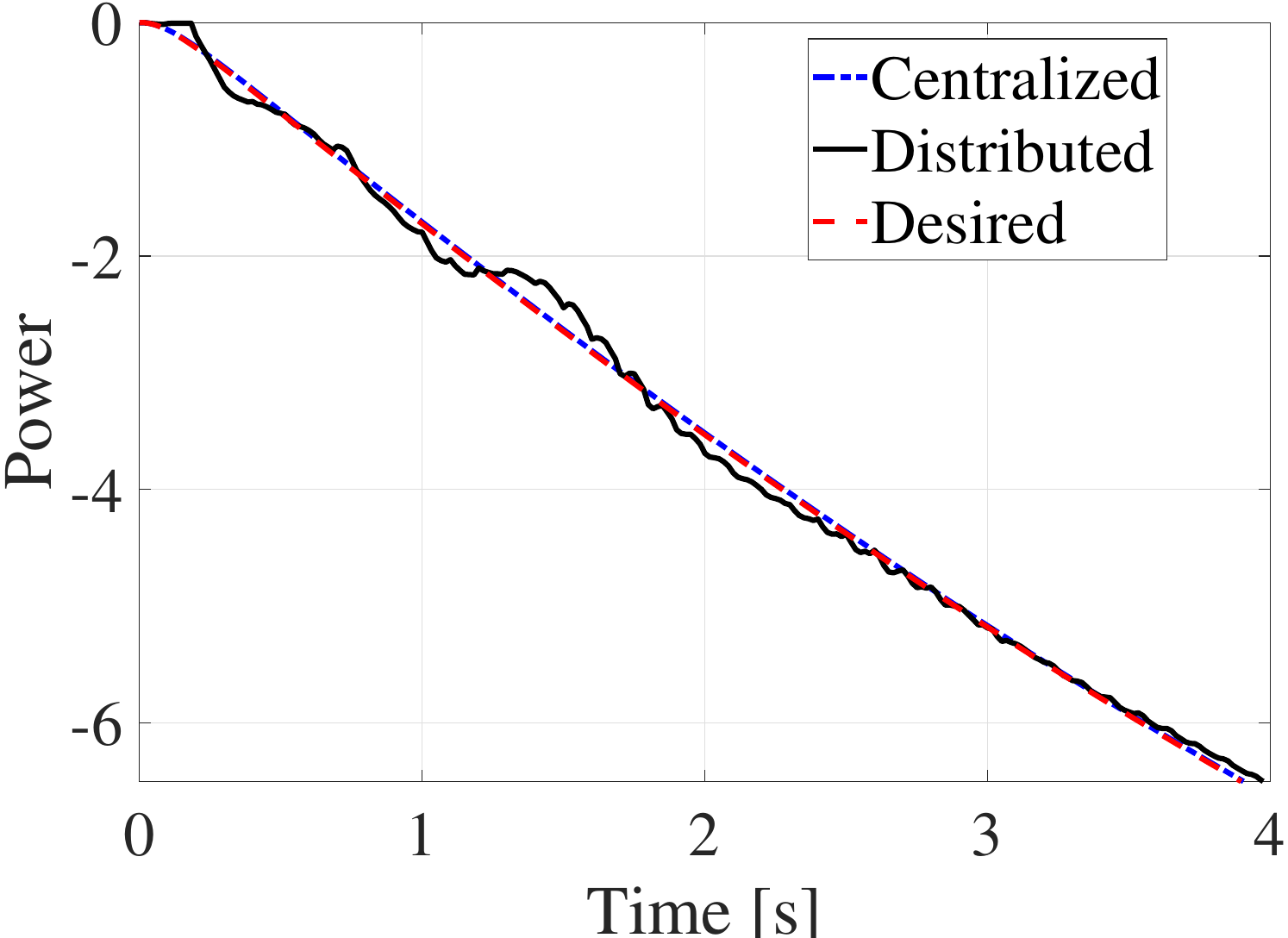}
		\caption{Scenario 1: Initial transient}
		\label{fig:perfect_all_detail}
	\end{subfigure}
	\begin{subfigure}{0.24\textwidth}
		\centering
		\includegraphics[width=0.95\linewidth]{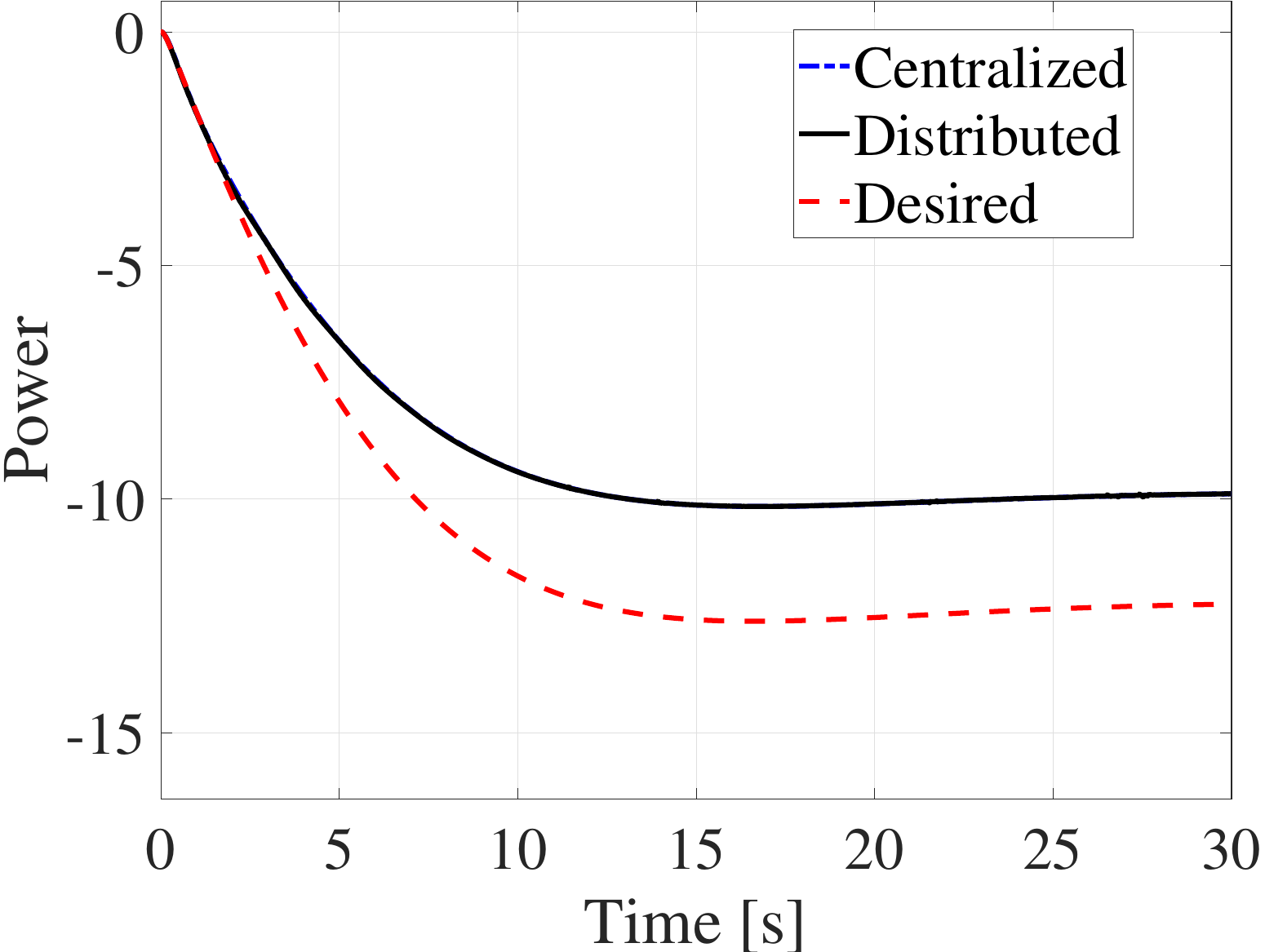}
		\caption{Scenario 2: Full simulation}
		\label{fig:imperfect_all}
	\end{subfigure}
	\hfill
	\begin{subfigure}{0.24\textwidth}
		\centering
		\includegraphics[width=0.95\linewidth]{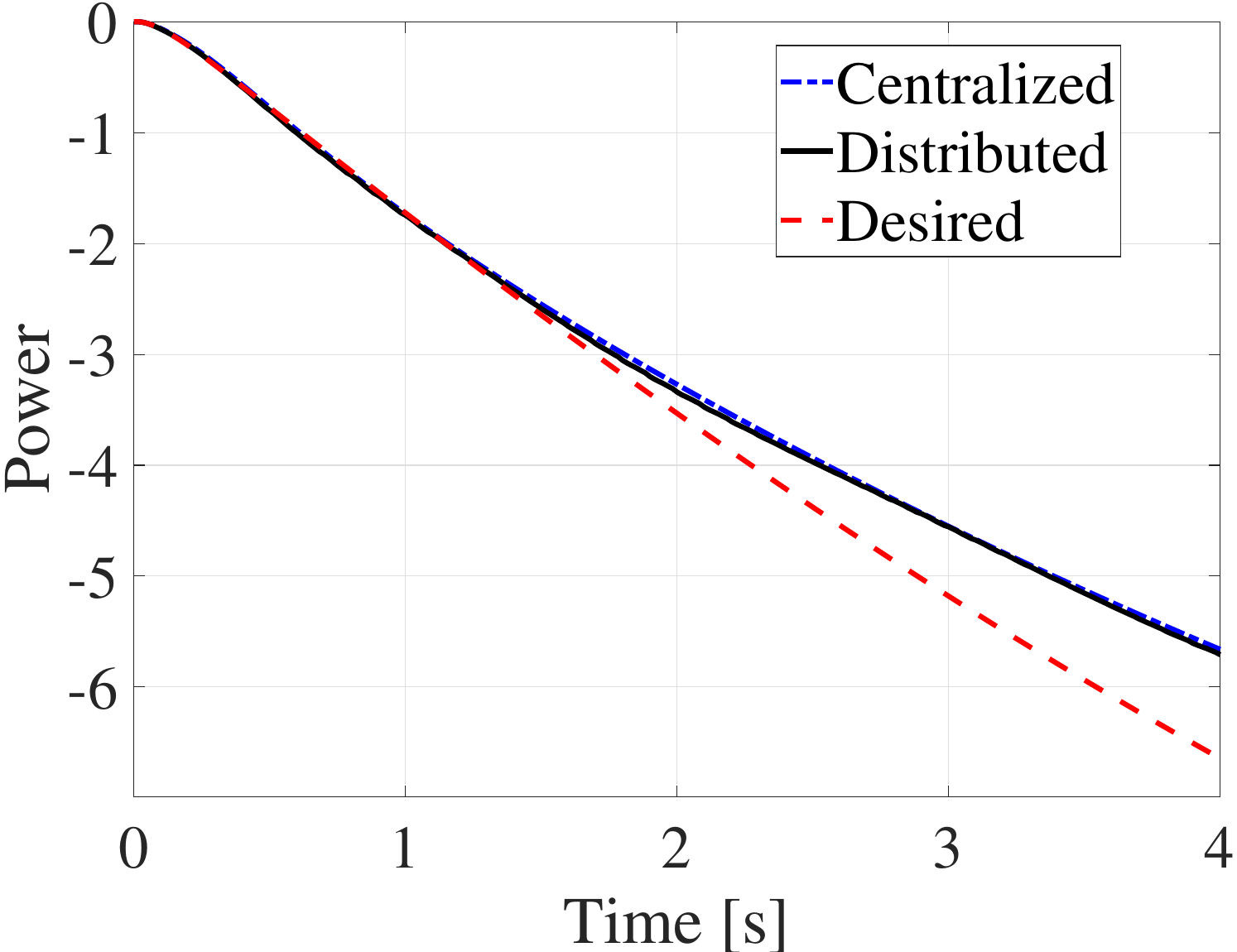}
		\caption{Scenario 2: Initial transient}
		\label{fig:imperfect_all_detail}
	\end{subfigure}
	\caption{Aggregate power response \( P_{vpp}^k \) in Scenario 1 and Scenario 2.}
	\label{fig:all_responses}
\end{figure}

In our second scenario, we consider a decrease of nominal power from WTs, e.g.,
due to a decrease in local wind speeds, thus tightening their power limit constraints
to 0.612 and making perfect model matching not feasible.
The individual and aggregate device output is shown
in \autoref{subfig:imperfect_dec} and \autoref{fig:imperfect_all}, respectively.
In this case, we perform 50 iterations of \autoref{alg:dual_conensus_admm} before
starting the simulation, which more closely corresponds to the nominal case
where local controllers are distributedly computed before deployment.
Performance is significantly improved during transient
where oscillations are limited, see 
\autoref{fig:perfect_all_detail} and \autoref{fig:imperfect_all_detail},
and steady-state where coupling constraints are more rigidly satisfied, see \autoref{subfig:imperfect_dec}.
This improvement is justified by the rapid convergence of \autoref{alg:dual_conensus_admm}, 
reaching a relative suboptimality of \( 10^{-5} \) w.r.t.\ the optimal value of \eqref{eq:abmm}
within 100 iterations, as shown in \autoref{subfig:suboptimality},
which is why the warm-started Scenario 2 shows improved performance over the cold-started Scenario 1.
Moreover, the reduced nominal power of the WTs leads to a decreased contribution in the
aggregate steady-state power output as prescribed by the coupling constraints,
while the opposite behavior is observed for PVs.
The ESs are more active in Scenario 2 and compensate for WTs,
especially during the initial transient given that ESs are characterized
by the smallest time constant.
Further, \autoref{subfig:imperfect_dec} clearly indicates that devices satisfy constraints
non-conservatively and are being operated at their limits.
Finally, our distributed method yields very similar responses
to the centralized solution, even under online deployment,
as shown in \autoref{fig:all_responses}.
\section{Conclusion} \label{sec:conclusion}
In this paper, we proposed a distributed, optimal control design of local state feedback
controllers for a set of interconnected agents with state and input constraints.
Our approach employs SLS with SPA to prescribe a desired aggregate behavior and impose
device constraints non-conservatively via a convex optimization problem,
without closed-loop finite impulse response or locality constraints.
The main challenge addressed was solving SLS with SPA in a distributed manner, 
for which we developed a dual consensus ADMM algorithm and, under weak assumptions, 
provided a convergence certificate that guarantees convergence to the set of primal minimizers, 
unlike similar dual consensus ADMM algorithms where divergence of the primal variables was possible.
Additionally, our optimization algorithm extended existing theoretical results
on DAO, by handling coupling constraints and lifting certain common, yet restrictive, regularity conditions
on the objective, e.g., strong convexity and differentiability.
We demonstrated the effectiveness of our method on a control design problem for 
DERs that collectively provide stability services to the power grid.
Future work includes investigating the convergence rate of the proposed
distributed optimization algorithm, and development of distributed solutions for 
SLS with SPA in the case of \( \mathcal{H}_{\infty} \) control synthesis.
\bibliographystyle{ieeetran}
\bibliography{sls_admm}

\end{document}